\newtheorem {theorem} {Theorem}
\newtheorem {proposition} [theorem]{Proposition}
\newtheorem {remark} [theorem]{Remark}
\newtheorem {definition} [theorem]{Definition}
\begin{document}

\title[Canards from Chua's circuit]
{Canards from Chua's circuit}

\author[J.M. Ginoux, J. Llibre]
{Jean-Marc Ginoux$^1$, Jaume Llibre$^2$ and Leon Chua$^3$}

\address{$^1$ Laboratoire {\sc Protee}, I.U.T. de Toulon,
Universit\'{e} du Sud, BP 20132, F-83957 La Garde cedex, France}
\email{ginoux@univ-tln.fr}

\address{$^2$ Departament de Matem\`{a}tiques,
Universitat Aut\`{o}noma de Barcelona, 08193 Bellaterra, Barcelona,
Spain} \email{jllibre@mat.uab.cat}

\address{$^3$ DEECS Department, University of California, Berkeley
253 Cory Hall {\#}1770, Berkeley, CA 94720-1770} \email{chua@eecs.berkeley.edu}

\subjclass{}

\keywords{Geometric Singular Perturbation Method; Flow Curvature
Method; singularly perturbed dynamical systems; canard solutions.}

\begin{abstract}
At first, the aim of this work is to extend Beno\^{i}t's theorem for the generic existence of ``canards'' solutions in \textit{singularly perturbed dynamical systems} of dimension three with one fast variable to those of dimension four. Then, it is established that this result can be found according to the \textit{Flow Curvature Method}. Applications to Chua's cubic model of dimension three and four enables to state existence of ``canards'' solutions in such systems.
\end{abstract}

\maketitle

\section{Introduction}
\label{Intro}

Many systems in biology, neurophysiology, chemistry, meteorology, electronics exhibit several time scales in their evolution. Such systems, todays called \textit{singularly perturbed dynamical systems}, have been modeled by a system of differential equations (\ref{eq1}) having a small parameter multiplying one or several components of its vector field. Since the works of Andronov \& Chaikin [1937] and Tikhonov [1948], the \textit{singular perturbation method\footnote{For an introduction to singular perturbation method see Malley [1974] and Kaper [1999].}} has been the subject of many research, among which we will quote those of Arg\'{e}mi [1978] who carefully studied the slow motion. According to Tikhonov [1948], Takens [1976], Jones [1994] and Kaper [1999] \textit{singularly perturbed systems} may be defined as:

\begin{equation}
\label{eq1}
\begin{array}{*{20}c}
 {{\vec {x}}' = \varepsilon \vec {f}\left( {\vec {x},\vec {y},\varepsilon }
\right),\mbox{ }} \hfill \\
 {{\vec {y}}' =  \vec {g}\left( {\vec {x},\vec {y},\varepsilon }
\right)}, \hfill \\
\end{array}
\end{equation}

where $\vec {x} \in \mathbb{R}^p$, $\vec {y} \in \mathbb{R}^m$,
$\varepsilon \in \mathbb{R}^ + $, and the prime denotes
differentiation with respect to the independent variable $t$. The
functions $\vec {f}$ and $\vec {g}$ are assumed to be $C^\infty$
functions\footnote{In certain applications these functions will be
supposed to be $C^r$, $r \geqslant 1$.} of $\vec {x}$, $\vec {y}$
and $\varepsilon$ in $U\times I$, where $U$ is an open subset of
$\mathbb{R}^p\times \mathbb{R}^m$ and $I$ is an open interval
containing $\varepsilon = 0$.

\smallskip

In the case when $0 < \varepsilon \ll 1$, i.e., is a small positive number, the variable $\vec {x}$ is called \textit{slow} variable, and $\vec {y}$ is called \textit{fast} variable. Using Landau's notation: $O\left( \varepsilon^k \right)$ represents a function $f$ of $u$ and $\varepsilon $ such that $f(u,\varepsilon) / \varepsilon^k$ is bounded for positive $\varepsilon$  going to zero, uniformly for $u$ in the given domain.

\smallskip

In general it is used to consider that $\vec {x}$ evolves at an $O\left( \varepsilon \right)$ rate; while $\vec {y}$ evolves at an $O\left( 1 \right)$ \textit{slow} rate. Reformulating system (\ref{eq1}) in terms of the rescaled variable $\tau = \varepsilon t$, we obtain

\begin{equation}
\label{eq2}
\begin{aligned}
\dot {\vec {x}} & = \vec{f} \left( {\vec{x},\vec{y},\varepsilon} \right), \\
\varepsilon \dot {\vec {y}} & = \vec {g}\left( {\vec{x}, \vec{y},\varepsilon }
\right).
\end{aligned}
\end{equation}

The dot represents the derivative with respect to the new independent variable $\tau$.

\smallskip

The independent variables $t$ and $\tau $ are referred to the \textit{fast} and \textit{slow} times, respectively, and (\ref{eq1}) and (\ref{eq2}) are called the \textit{fast} and \textit{slow} systems, respectively. These systems are equivalent whenever $\varepsilon \ne 0$, and they are labeled \textit{singular perturbation problems} when $0 < \varepsilon \ll 1$. The label ``singular'' stems in part from the discontinuous limiting behavior
in system (\ref{eq1}) as $\varepsilon \to 0$.

\smallskip

In such case system (\ref{eq2}) leads to a differential-algebraic system called \textit{reduced slow system} whose dimension decreases from $p + m = n$ to $m$. Then, the \textit{slow} variable $\vec {x} \in \mathbb{R}^p$ partially evolves in the submanifold $M_0$ called the \textit{critical manifold}\footnote{It corresponds to the approximation of the slow invariant manifold, with an error of $O(\varepsilon)$.} and defined by

\begin{equation}
\label{eq3} M_0 := \left\{ {\left( {\vec {x},\vec {y}} \right):\vec
{g}\left( {\vec {x},\vec {y},0} \right) = {\vec {0}}} \right\}.
\end{equation}

When $D_{\vec{x}}\vec{f}$ is invertible, thanks to the Implicit Function Theorem, $M_0 $ is given by the graph of a $C^\infty $ function $\vec {x} = \vec {G}_0 \left( \vec {y} \right)$ for $\vec {y} \in D$, where $D\subseteq \mathbb{R}^p$ is a compact, simply connected domain and the boundary of D is an $(p - 1)$--dimensional $C^\infty$ submanifold\footnote{The set D is overflowing invariant with respect to (\ref{eq2}) when $\varepsilon = 0$. See Kaper [1999] and Jones [1994].}.

\smallskip

According to Fenichel [1971-1979] theory if $0 < \varepsilon \ll 1$ is sufficiently small, then there exists a function $\vec {G}\left( {\vec {y},\varepsilon } \right)$ defined on D such that the manifold

\begin{equation}
\label{eq4} M_\varepsilon := \left\{ {\left( {\vec {x},\vec {y}}
\right):\vec {x} = \vec {G}\left( {\vec {y},\varepsilon } \right)} \right\},
\end{equation}

is locally invariant under the flow of system (\ref{eq1}). Moreover, there exist perturbed local stable (or attracting) $M_a$ and unstable (or repelling) $M_r$ branches of the \textit{slow invariant manifold} $M_\varepsilon$. Thus, normal hyperbolicity of $M_\varepsilon$ is lost via a saddle-node bifurcation of the \textit{reduced slow system} (\ref{eq2}). Then, it gives rise to solutions of ``canard'' type that have been discovered by a group of French mathematicians (Beno\^{i}t \textit{et al.} [1981]) in the beginning of the eighties while they were studying relaxation oscillations in the classical equation of Van der Pol [1926] (with a constant forcing term). They observed, within a small range of the control parameter, a fast transition for the amplitude of the limit cycle varying suddenly from small amplitude to a large amplitude. Due to the fact that the shape of the limit cycle in the phase plane looks as a duck they called it ``canard cycle''. So a ``canard'' is a solution of a singularly perturbed dynamical system following the \textit{attracting} branch $M_a$ of the \textit{slow invariant manifold}, passing near a bifurcation point located on the fold of the \textit{critical manifold}, and then following the \textit{repelling} branch $M_r$ of the \textit{slow invariant manifold}.

\newpage

\begin{remark}
Geometrically a \textit{maximal canard} corresponds to the intersection of the attracting and repelling branches $M_a \cap M_r$ of the slow manifold in the vicinity of a non-hyperbolic point. Canards are a special class of solution of singularly perturbed dynamical systems for which normal hyperbolicity is lost.\\
Canards in singularly perturbed systems with two or more slow variables {\rm (}$\vec {x} \in \mathbb{R}^p$, $p \geqslant 2$ {\rm )} and one fast variable {\rm (}$\vec {y} \in \mathbb{R}^m$, $m = 1${\rm )} are robust, since maximal canards generically persist under small parameter changes\footnote{See Beno\^{i}t [1983, 2001], Szmolyan \& Wechselberger [2001] and Wechselberger [2005].}.

\end{remark}

In dimension three, Beno\^{i}t [1983] has stated a theorem for the existence of canards (recalled in Sec. 3) in which he proved that if the ``reduced vector field'' has a pseudo-singular point of saddle type (whose definitions are recalled in Sec. 2), then the ``full system'' exhibits a canard solution which evolves from the attractive part of the slow manifold towards its repelling part. So, the first aim of this work (presented in Sec. 4) is to extend this theorem to dimension four.

Then, it is also stated that such condition for the generic existence of the peculiar solutions, called ``canards'', in \textit{singularly perturbed dynamical systems} of dimension three and four with only one fast variable can be found according to the \textit{Flow Curvature Method} developed by Ginoux \textit{et al.} [2008] and Ginoux [2009] and recalled in Sec. 5.

Thus, we will establish that Beno\^{i}t's condition for the generic existence of ``canards'' solutions in such systems is also given by the existence of a pseudo-singular point of saddle type for the \textit{flow curvature manifold} of the ``reduced systems''. This result, presented in Sec. 5, is based on the use of the so-called ``Second derivative test'' involving the Hessian of hypersurfaces. Applications to Chua's cubic model of dimension three and four enables to state existence of ``canards'' solutions in such systems.

\section{Definitions}
\label{Sec2}

Let's consider a $n$-dimensional \textit{singularly perturbed dynamical system} which may be written as:

\begin{equation}
\label{eq5}
\begin{aligned}
\dot {\vec {x}} & = \vec{f} \left( {\vec{x},\vec{y},\varepsilon} \right), \\
\varepsilon \dot {\vec {y}} & = \vec {g}\left( {\vec{x}, \vec{y},\varepsilon }
\right),
\end{aligned}
\end{equation}

\smallskip

where $\vec{x}= (x_1, \ldots, x_p)^t \in \mathbb{R}^p$, $\vec{y}= (y_1, \ldots, y_m)^t \in \mathbb{R}^m$, $\vec{f}= (f_1, \ldots, f_p)^t$, $\vec{g}= (g_1, \ldots, g_m)^t$, $\varepsilon \in \mathbb{R}^ + $ such that $0 < \varepsilon << 1$, and the dot denotes differentiation with respect to the independent variable $t$. The functions $f_i$ and $g_i$ are assumed to be $C^2$ functions of $x_i$ and $y_j$ (with $1 < i < p$ and $1 < j < m$).

In order to tackle this problem many analytical approaches such as \textit{asymptotic expansions} and \textit{matching methods} were developed (see Zvonkin \& Schubin [1984] and Rossetto [1986]). According to O'Malley [1991] the asymptotic expansion is expected to diverge. Then, Beno\^{i}t [1982, 1983] used non-standard analysis to study canards in $\mathbb{R}^3$.\\

In the middle of the seventies, a geometric approach developed by Takens [1976] consisted in considering that the following system:

\begin{equation}
\label{eq6}
\begin{aligned}
\dot {\vec {x}} & = \vec{f} \left( {\vec{x},\vec{y},\varepsilon} \right), \\
0 & = \vec {g}\left( {\vec{x}, \vec{y},\varepsilon }
\right),
\end{aligned}
\end{equation}

which has been called \textit{constrained system} corresponds to the \textit{singular approximation} of system (\ref{eq5}) and where $\vec {g}\left( \vec{x}, \vec{y},\varepsilon \right) = 0$ defines the so-called \textit{slow manifold} $S_0$ or \textit{critical manifold} of the \textit{singular approximation}, \textit{i.e.} the zero order approximation in $\varepsilon$ of the \textit{slow manifold}.

\section{Three-dimensional singularly perturbed systems}
\label{Sec3}

In dimension greater than two, it is important to distinguish cases depending on \textit{fast} dimensions $m$ and \textit{slow} dimensions $p$. For three-dimensional \textit{singularly perturbed dynamical systems} we have two cases: $(p,m) = (2,1)$ and $(p,m) = (1,2)$. In this work we will only focus on the former case which has been subject of extensive research led by Eric Beno\^{i}t [1981, 1982, 1983, 2001] and summed up below. So, in the case $(p,m) = (2,1)$ three-dimensional \textit{singularly perturbed dynamical systems} (\ref{eq5}) may be defined as:

\begin{equation}
\label{eq7}
\begin{aligned}
 \dot{x_1} & = f_1 \left( x_1, x_2, y_1 \right), \hfill \\
 \dot{x_2} & = f_2 \left( x_1, x_2, y_1 \right), \hfill \\
 \varepsilon \dot{y_1} & = g_1 \left( x_1, x_2, y_1 \right), \hfill
\end{aligned}
\end{equation}

\smallskip

where $\vec{x}= (x_1, x_2)^t \in \mathbb{R}^2$, $\vec{y}= (y_1) \in \mathbb{R}^1$, $0 < \varepsilon << 1$ and the functions $f_i$ and $g_1$ are assumed to be $C^2$ functions of $(x_1, x_2, y_1)$.

\subsection{Fold, cusp and pseudo-singular points}\hfill\\

Let's recall the following definitions

\begin{definition}\label{def1} \hfill \\

\smallskip

The location of the points where $\partial_{y_1} g_1\left(x_1,x_2,y_1 \right) = p\left(x_1,x_2,y_1 \right) = 0$ and $g_1\left(x_1,x_2,y_1 \right) = 0$ is called the \textit{fold}.
\end{definition}

Following to Arg\'{e}mi [1978], the \textit{cofold} is defined as the projection, if it exists, of the fold line onto $S_0$   along the
$y_1$-direction.

\smallskip

According  to Beno\^{i}t [1983] system (\ref{eq7}) may have various types of singularities.

\begin{definition} \label{def2} \hfill \\

\begin{itemize}

\item[-] The fold is the set of points where the \textit{slow manifold}
is tangent to the $y_1$-direction.

\item[-] The cusp is the set of points where the \textit{fold} is
tangent to the $y_1$-direction.

\item[-] The stationary points are not on the \textit{fold} according to genericity assumptions.

\item[-] The pseudo-singular points are defined as the location of the points where

\begin{equation}
\label{eq8}
\begin{aligned}
& g_1\left(x_1,x_2,y_1 \right) = 0,\\
& \frac{\partial g_1\left(x_1,x_2,y_1 \right)}{\partial y_1} = 0, \\
& \frac{\partial g_1\left(x_1,x_2,y_1 \right)}{\partial x_1} f_1\left(x_1, x_2, y_1 \right) + \frac{\partial g_1\left(x_1,x_2,y_1 \right)}{\partial x_2} f_2\left(x_1,x_2,y_1 \right) = 0.
\end{aligned}
\end{equation}

\end{itemize}

\end{definition}

The concept of \textit{pseudo-singular points} has been originally introduced by Takens [1976] and Arg\'{e}mi [1978]. Again,
according to Beno\^{i}t [1983]:\\

\begin{itemize}

\item[-] the first condition indicates that the point belongs to the \textit{slow manifold},

\item[-] the second condition means that the point is on the \textit{fold},

\item[-] the third condition shows that the projection of the vector field on the
$(x_1,x_2)$-plane is tangent to the \textit{fold}.

\end{itemize}

\subsection{Reduced vector field}\hfill \\

If $x_1$ can be expressed as an implicit function of $x_2$ and $y_1$ defined by $g_1\left(x_1,x_2,y_1 \right) = 0$, the ``reduced normalized vector field'' reads:

\begin{equation}
\label{eq9}
\begin{aligned}
 \dot{x_2} & = - f_2 \left( x_1, x_2, y_1 \right) \frac{\partial g_1}{\partial y_1}\left( x_1, x_2, y_1 \right), \hfill \\
 \dot{y_1} & = \frac{\partial g_1}{\partial x_1} f_1\left(x_1, x_2, y_1 \right) + \frac{\partial g_1}{\partial x_2} f_2\left(x_1, x_2, y_1 \right).
\end{aligned}
\end{equation}

\subsection{Reduced vector field method}\hfill \\

By using the classification of fixed points of two-dimensional dynamical systems based on the sign of the eigenvalues of the functional Jacobian matrix, Beno\^{i}t [1983] characterized the nature of the \textit{pseudo-singular point} $M$ of the ``reduced vector field'' (\ref{eq9}).
Let's note $\Delta$ and $T$ respectively the determinant and the trace of the functional Jacobian matrix associated with system (\ref{eq9}). The \textit{pseudo-singular point} $M$ is:\\

\begin{itemize}

\item  a \textit{saddle} if and only if $\Delta < \dfrac{T^2}{4}$ and $\Delta < 0$.

\item  a \textit{node} if and only if $0 < \Delta < \dfrac{T^2}{4}$.

\item  a \textit{focus} if and only if $\dfrac{T^2}{4} < \Delta $.

\end{itemize}

Then, Beno\^{i}t [1983, p. 171] states the following theorem for the existence of canards:

\begin{theorem}\hfill \\
\label{theo4}
If the ``reduced vector field'' {\rm (\ref{eq9})} has a pseudo-singular point of saddle type, then system {\rm (\ref{eq7})} exhibits a canard solution which evolves from the attractive part of the slow manifold towards its repelling part.

\end{theorem}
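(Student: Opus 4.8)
The plan is to give a geometric proof, in the spirit of Fenichel's invariant-manifold theory and the blow-up (geometric desingularization) technique of Szmolyan and Wechselberger, rather than the non-standard analysis originally used by Beno\^{i}t. The argument has two stages: (i) at $\varepsilon = 0$ one constructs a \emph{singular canard} $\Gamma_0$, a concatenation at $M$ of a slow trajectory lying on the attracting sheet of the slow manifold with one lying on the repelling sheet; (ii) one shows that $\Gamma_0$ perturbs to a genuine orbit of (\ref{eq7}) for every sufficiently small $\varepsilon > 0$.

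For stage (i), I would first fix local $C^2$ coordinates centred at $M$ in which the slow manifold $S_0 = \{g_1 = 0\}$ is a graph $x_1 = \varphi(x_2,y_1)$ over the slow variables, the fold of Definition~\ref{def1} (where in addition $\partial_{y_1}g_1 = 0$) is a smooth curve $F \subset S_0$ through the origin, and the genericity hypotheses of Definition~\ref{def2} hold (stationary points off $F$, $F$ non-degenerate at $M$). In these coordinates (\ref{eq7}) takes a folded-singularity normal form and the reduced normalized vector field (\ref{eq9}) is a planar $C^1$ field on $S_0$: its first component carries the factor $\partial_{y_1} g_1$ and so vanishes on $F$, and the vanishing of its second component on $F$ is exactly the third equation of (\ref{eq8}); hence $M$ is an equilibrium of (\ref{eq9}). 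By hypothesis $M$ is of \emph{saddle} type, i.e. $\Delta < 0$ (which forces $\Delta < T^2/4$), so this equilibrium is hyperbolic of saddle type and carries one-dimensional manifolds $W^s(M), W^u(M)$. Because the desingularization built into (\ref{eq9}) reverses time on the repelling part $M_r$ relative to the attracting part $M_a$, the stable manifold $W^s(M)$ has one branch on $M_a$ and one on $M_r$; their union through $M$ is a connected curve which, in the original slow time, flows along $M_a$ towards the fold, reaches $M$, and flows along $M_r$ away from the fold --- this is the singular canard $\Gamma_0$ (the unstable manifold $W^u(M)$ gives the faux canard, which is not needed). The genericity assumption that stationary points avoid $F$ guarantees that $M$ is not an equilibrium of the layer flow, so $\Gamma_0$ is a genuine canard candidate.

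For stage (ii), away from $F$ the sheets $M_a$ and $M_r$ are normally hyperbolic, so Fenichel theory (as recalled around (\ref{eq4})) yields locally invariant perturbed slow manifolds $M_{a,\varepsilon}$ and $M_{r,\varepsilon}$ that are $O(\varepsilon)$-close to $M_a$ and $M_r$ and carry a slow flow that is an $O(\varepsilon)$ perturbation of the reduced flow; following $\Gamma_0$ backward along $M_a$ and forward along $M_r$, these are defined up to any fixed small distance from $F$. The crux is to continue them \emph{through} the fold region, where normal hyperbolicity fails and the naive asymptotic series diverge (cf. Sec.~\ref{Sec2}). For this I would blow up the fold curve, passing to weighted (cylindrical) coordinates that resolve the non-hyperbolic line into a manifold on which the desingularized vector field has partially hyperbolic equilibria, and show that $\Gamma_0$ lifts to a heteroclinic connection between these equilibria and that the lifts of $M_{a,\varepsilon}$ and $M_{r,\varepsilon}$ meet transversally along that connection --- the transversality being governed by the saddle quantity and being exactly where $\Delta < 0$, as opposed to the node or focus inequalities, is used. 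Blowing down, one obtains an orbit of (\ref{eq7}) that follows $M_{a,\varepsilon}$, passes within $o(1)$ of $M$ through the fold neighbourhood, and then follows $M_{r,\varepsilon}$: a canard solution. Since the intersection is transversal and there are two slow variables, it is moreover robust, in accordance with the Remark.

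The main obstacle is precisely this passage through the fold: Fenichel theory gives nothing there, and one has to carry out the blow-up analysis carefully --- choosing the correct weights, describing the dynamics in each coordinate chart, and matching the entry and exit behaviour of $M_{a,\varepsilon}$ and $M_{r,\varepsilon}$ --- in order to conclude that $M_{a,\varepsilon} \cap M_{r,\varepsilon} \neq \varnothing$ near the lift of $\Gamma_0$. A secondary technical point is regularity: since the data are only $C^2$, one should use the finitely-smooth versions of the invariant-manifold and desingularization theorems (or first approximate the vector field), and keep the genericity conditions of Definition~\ref{def2} in force throughout so that the normal-form reduction of stage (i) is legitimate.
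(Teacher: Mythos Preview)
Your plan is sound and follows the now-standard geometric route to canards at folded saddles: build the singular canard $\Gamma_0$ from the saddle separatrices of the desingularized reduced flow, extend the Fenichel slow manifolds $M_{a,\varepsilon}$, $M_{r,\varepsilon}$ up to the fold, and use a weighted blow-up of the fold curve to show they intersect transversally along a perturbation of $\Gamma_0$. The identification of the main obstacle (loss of normal hyperbolicity at the fold) and of the place where the saddle inequality $\Delta<0$ actually enters (the transversality computation in the blow-up charts) is correct, and the caveat about $C^2$ regularity is appropriate.

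The paper, however, does not prove this theorem at all: its ``proof'' is simply the sentence ``See Beno\^{i}t [1983, p.~171].'' So there is nothing to compare at the level of argument within the paper itself. What can be said is that Beno\^{i}t's original 1983 proof proceeds by \emph{non-standard analysis} --- working directly with infinitesimals to track orbits through the fold --- whereas you have chosen the Szmolyan--Wechselberger blow-up approach. Both reach the same conclusion; the non-standard proof is shorter once one accepts the machinery, while the geometric desingularization you outline is more in line with the Fenichel framework the paper invokes elsewhere and makes the robustness of the canard (the Remark before Definition~\ref{def1}) transparent as a consequence of transversality. Your sketch is therefore a genuine alternative proof outline, considerably more detailed than anything the paper supplies for this statement.
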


\begin{proof}
See Beno\^{i}t [1983, p. 171].
\end{proof}

\subsection{Chua's system}\hfill \\

Let's consider the system introduced by Itoh \& Chua [1992]:

\begin{equation}
\label{eq10}
\begin{aligned}
\dot{x} & = z - y, \\
\dot{y} & = \alpha(x + y), \\
\varepsilon \dot{z} & = -x - k(z),
\end{aligned}
\end{equation}

where $k(z) = z^3/3 - z$ and $\alpha$ is a constant.\\

According to Eq. (\ref{eq9}) the reduced vector field reads:

\begin{equation}
\label{eq11}
\begin{aligned}
\dot{y} & = \alpha k'(z)(-k(z) + y) = \alpha (z^2 - 1)\left(-
\dfrac{z^3}{3} + z + y\right), \\
\dot{z} & = y - z.
\end{aligned}
\end{equation}

By Def. \ref{def2} the singularly perturbed dynamical system (\ref{eq10}) admits $M(\pm 2/3, \pm 1, \pm 1)$ as
\textit{pseudo-singular points}. The functional Jacobian matrix of reduced vector field (\ref{eq11}) evaluated at $M$ reads:

\begin{equation}
\label{eq12}
\begin{pmatrix}
0 & \dfrac{10\alpha}{3} \\
1 & -1
\end{pmatrix}
\end{equation}

from which we deduce that: $\Delta = -\dfrac{10\alpha}{3}$ and $T = -1$. So, we have:

\begin{equation}
\label{eq13}
\dfrac{T^2}{4} - \Delta = \dfrac{1}{12}\left( 3 + 40 \alpha \right).
\end{equation}

\smallskip

Thus, according to Th. \ref{theo4}, if $3 + 40 \alpha > 0$ and $\alpha > 0$, then $M$ is a \textit{pseudo-singular saddle point} and
so system (\ref{eq10}) exhibits canards solution. Itoh \& Chua [1992, p. 2791] have also noticed that if $\alpha > 0$, system (\ref{eq10}) has a \textit{pseudo-singular saddle point}.\\

Nevertheless, the original system (\ref{eq10}) admits, except the origin, two \textit{fixed points} $I(\pm \sqrt{6}, \mp \sqrt{6}, \mp \sqrt{6})$. The functional Jacobian matrix of the ``normalized slow dynamics'' evaluated at $I$ reads:

\begin{equation}
\label{eq14}
\begin{pmatrix}
0 & -5 & 5 \\
5\alpha & 5\alpha & 0\\
0 & 1 & -1
\end{pmatrix}
\end{equation}

from which we deduce that there are three eigenvalues:

\[
\lambda_1 = 0 \mbox{ ; } \lambda_{2,3} = \dfrac{1}{2} \left(-1 + 5 \alpha \pm \sqrt{1 - 90 \alpha + 25 \alpha^2} \right)
\]

\smallskip

Then, if these eigenvalues are complex conjugated we have:

\[
2Re\left( \lambda_{2,3}\right) =  -1 + 5\alpha
\]

\smallskip

But, according to the theorem of Lyapounov [1892], \textit{fixed points} $I$ are non stable equilibria provided that $-1 + 5\alpha > 0$.\\

Thus, ``canards'' solutions are observed in Chua's system (\ref{eq10}) for $\alpha > 1/5$ as exemplified in Fig. 1 in which such solutions passing through the \textit{pseudo-singular saddle point} $M(2/3, 1, 1)$ have been plotted for parameter set $(\alpha = 0.2571389636, \varepsilon = 1/20)$ in the $(x,y,z)$ phase space.

\begin{figure}[htbp]
\centerline{\includegraphics[width=8cm,height=8cm]{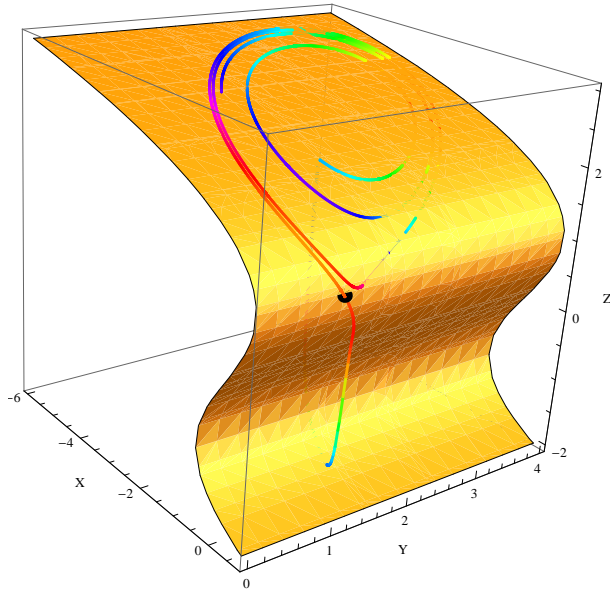}}
\caption{Canards solutions of Chua's system (\ref{eq10}).} \label{Fig1}
\end{figure}

\newpage

In Fig. 2 ``canards solutions'' winding around the \textit{pseudo-singular saddle point} $M(2/3, 1, 1)$ have been plotted for various values of parameter $\alpha$ in the $(z,x)$ phase plane for $\varepsilon = 1/20$.

\begin{figure}[htbp]
  \begin{center}
    \begin{tabular}{ccc}
      \includegraphics[width=6cm,height=6cm]{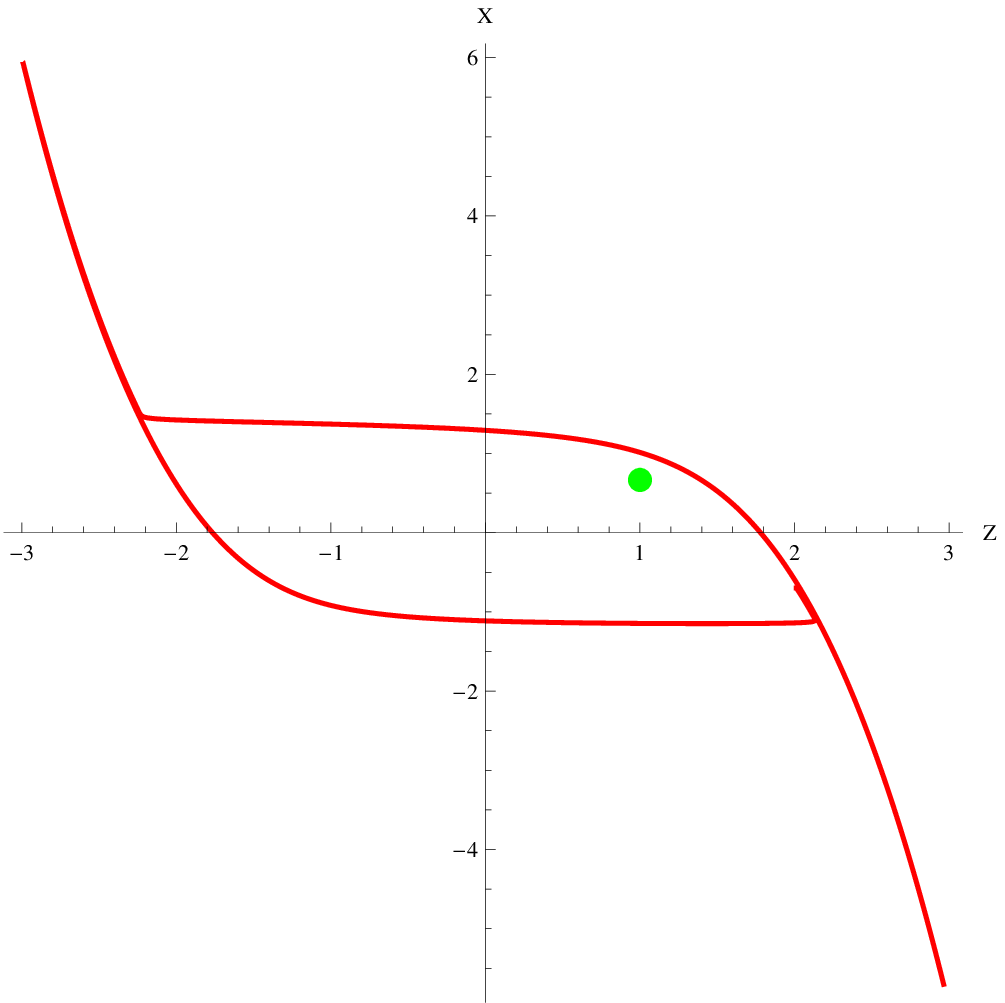} & ~~~~~~ &
      \includegraphics[width=6cm,height=6cm]{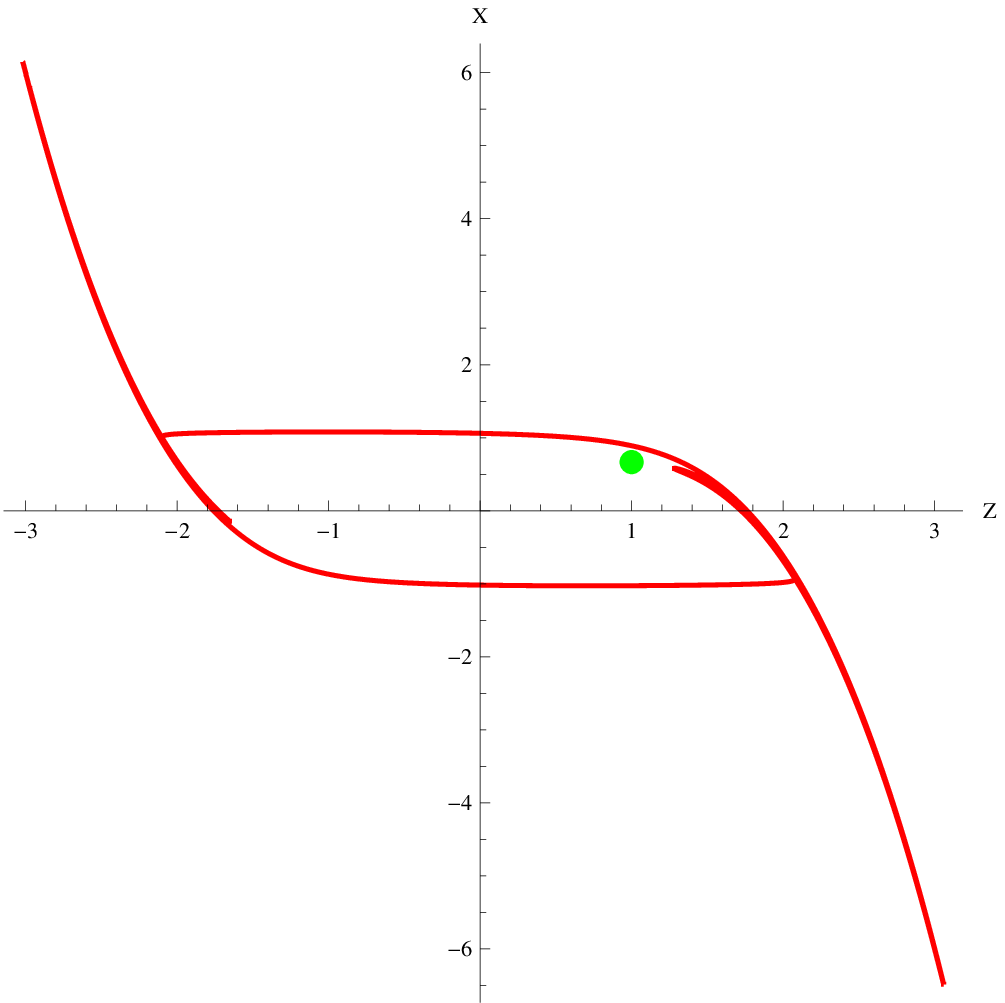} \\
      (a) $\alpha = 0.45$ & & (b) $\alpha = 0.35$ \\[0.5cm]
      \includegraphics[width=6cm,height=6cm]{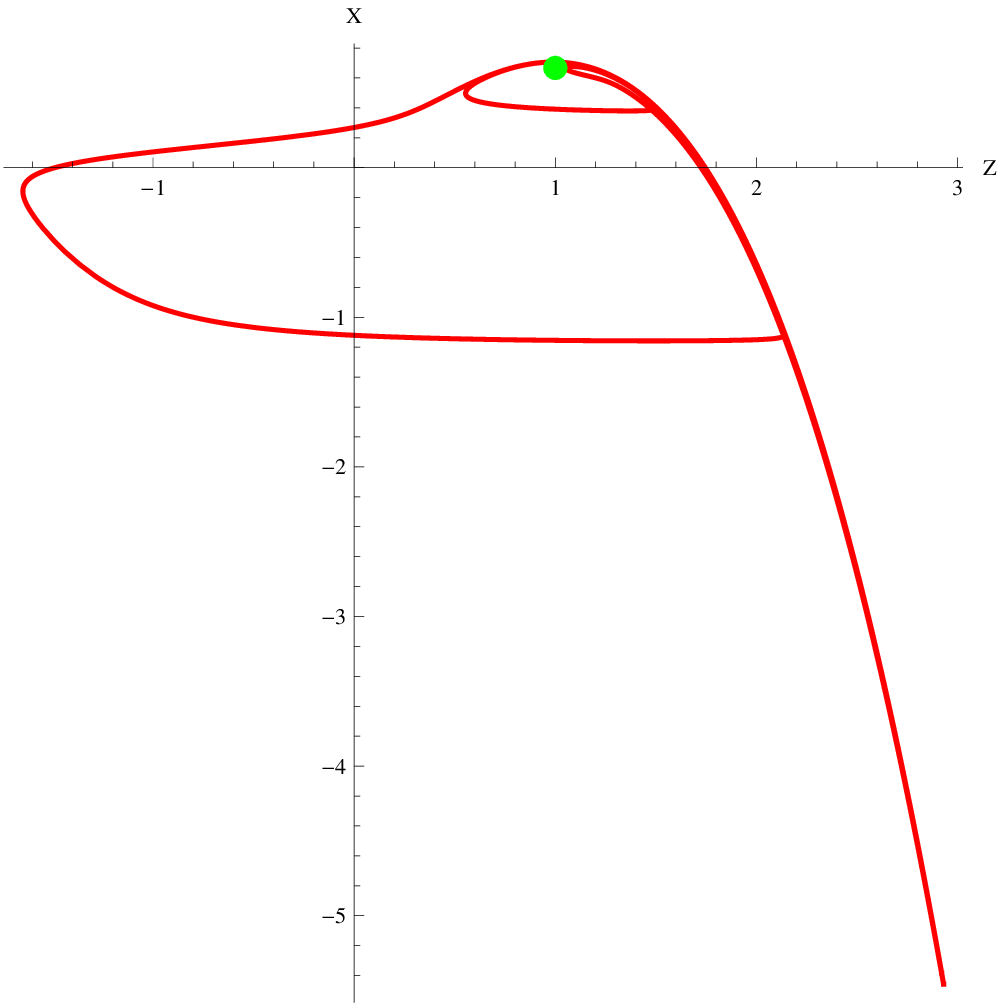} & ~~~ &
      \includegraphics[width=6cm,height=6cm]{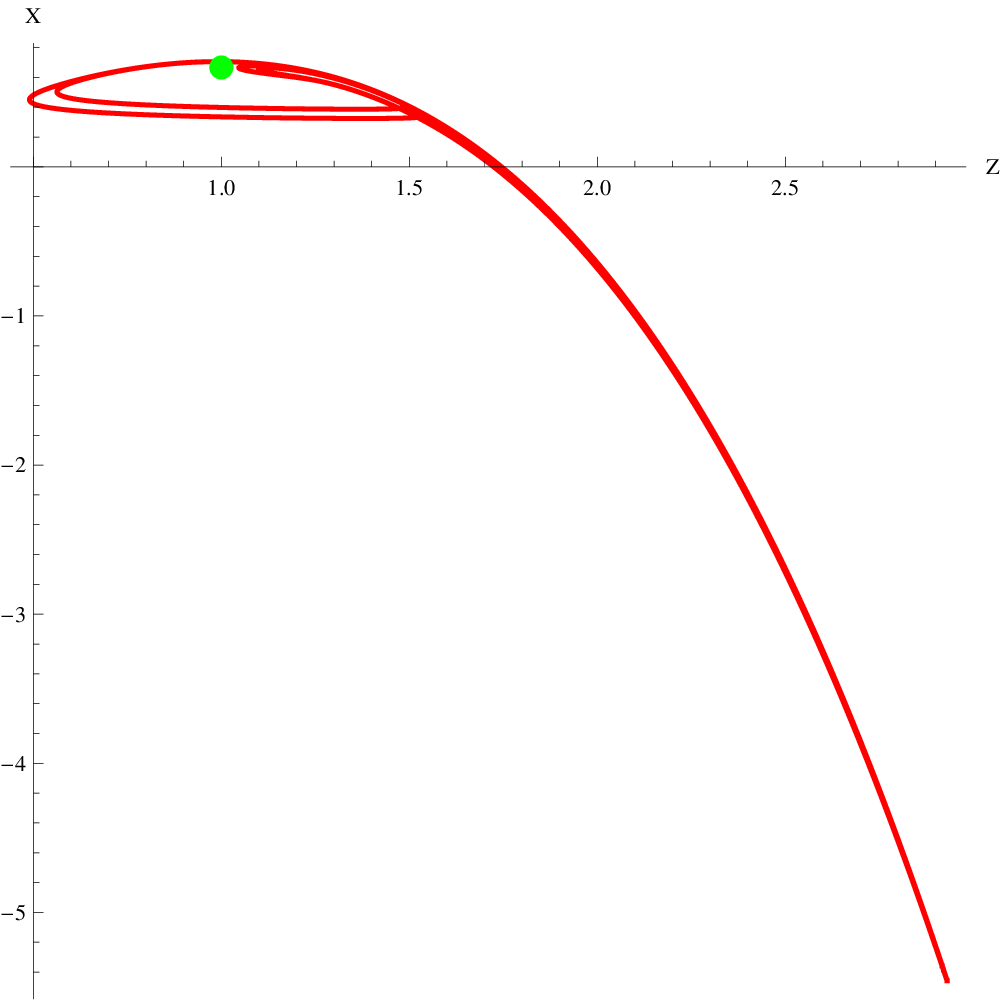} \\
      (c) $\alpha = 0.2571389636$ & & (d) $\alpha = 0.2571389$ \\[-0.5cm]
    \end{tabular}
    \vspace{0.1in}
    \caption{Canards solutions of Chua's system (\ref{eq10}).}
    \label{fig2}
  \end{center}
  \vspace{-0.25cm}
\end{figure}

\begin{remark}
Let's notice that we would have obtained the same kind of figures with the \textit{pseudo-singular saddle point} $M(-2/3, -1, -1)$ due to the symmetry of the system {\rm (\ref{eq10})}.
\end{remark}

\newpage

\section{Four-dimensional singularly perturbed systems}
\label{Sec4}

For four-dimensional \textit{singularly perturbed dynamical systems} we have three cases: $(p,m) = (3,1)$, $(p,m) = (2,2)$ and $(p,m) = (1,3)$. In this work we will only focus on the former case which will be subject to a special analysis allowing to extend Beno\^{i}t's Theorem \ref{theo4} to dimension four. So, in the case: $(p,m) = (3,1)$ four-dimensional \textit{singularly perturbed dynamical systems} may be defined as:

\begin{equation}
\label{eq15}
\begin{aligned}
 \dot{x_1} & = f_1 \left( x_1, x_2, x_3, y_1 \right), \hfill \\
 \dot{x_2} & = f_2 \left( x_1, x_2, x_3, y_1 \right), \hfill \\
 \dot{x_3} & = f_3 \left( x_1, x_2, x_3, y_1 \right), \hfill \\
 \varepsilon \dot{y_1} & = g_1 \left( x_1, x_2, x_3, y_1 \right), \hfill
\end{aligned}
\end{equation}

\smallskip

where $\vec{x}= (x_1, x_2, x_3)^t \in \mathbb{R}^3$, $\vec{y}= (y_1) \in \mathbb{R}^1$, $0 < \varepsilon << 1$, and the functions $f_i$ and $g_1$ are assumed to be $C^2$ functions of $(x_1, x_2, x_3, y_1)$.\\

The definitions of \textit{fold}, \textit{cusp} and \textit{pseudo-singular fixed points} may be extended to dimension four.

\subsection{Fold, cusp and pseudo-singular points}\hfill\\

Let's propose the following definitions

\begin{definition}\label{def3} \hfill \\

\smallskip

The location of the points where $\partial_{y_1} g_1\left(x_1,x_2, x_3, y_1 \right) = p\left(x_1,x_2, x_3, y_1 \right) = 0$ and $g_1\left(x_1,x_2,x_3,y_1 \right) = 0$ is called the \textit{fold}.
\end{definition}

The \textit{cofold} is still defined as the projection, if it exists, of the fold line onto $S_0$ along the
$y_1$-direction.

\smallskip

As previously system (\ref{eq15}) may have various types of singularities.

\begin{definition} \label{def4} \hfill \\

\begin{itemize}

\item[-] The fold is the set of points where the \textit{slow manifold}
is tangent to the $y_1$-direction.

\item[-] The cusp is the set of points where the \textit{fold} is
tangent to the $y_1$-direction.

\item[-] The stationary points are not on the \textit{fold} according to genericity assumptions.

\item[-] The pseudo-singular points are defined as the location of the points where

\begin{equation}
\label{eq16}
\begin{aligned}
& g_1\left(x_1,x_2,x_3,y_1 \right) = 0,\\
& \frac{\partial g_1\left(x_1,x_2,x_3,y_1 \right)}{\partial y_1} = 0, \\
& \frac{\partial g_1}{\partial x_1} f_1 + \frac{\partial g_1}{\partial x_2} f_2 + \frac{\partial g_1}{\partial x_3} f_3= 0.
\end{aligned}
\end{equation}

\end{itemize}

\end{definition}

\newpage

Again, following Beno\^{i}t [1983]:\\

\begin{itemize}

\item[-] the first condition indicates that the point belongs to the \textit{slow manifold},

\item[-] the second condition means that the point is on the \textit{fold},

\item[-] the third condition shows that the projection of the vector field on the
$(x_1,x_2)$-plane is tangent to the \textit{fold}.

\end{itemize}

\subsection{Reduced vector field}\hfill \\

If $x_1$ can be expressed as an implicit function of $x_2$, $x_3$ and $y_1$ defined by $g_1\left(x_1,x_2, x_3, y_1 \right) = 0$, the ``reduced normalized vector field'' reads:

\begin{equation}
\label{eq17}
\begin{aligned}
 \dot{x_2} & = - f_2 \left( x_1, x_2, x_3, y_1 \right) \frac{\partial g_1}{\partial y_1}\left( x_1, x_2, x_3, y_1 \right), \hfill \\
 \dot{x_3} & = - f_3 \left( x_1, x_2, x_3, y_1 \right) \frac{\partial g_1}{\partial y_1}\left( x_1, x_2, x_3, y_1 \right), \hfill \\
 \dot{y_1} & = \frac{\partial g_1}{\partial x_1} f_1 + \frac{\partial g_1}{\partial x_2} f_2 + \frac{\partial g_1}{\partial x_3} f_3.
\end{aligned}
\end{equation}

\subsection{Reduced vector field method}\hfill \\

By using the classification of fixed points of three-dimensional dynamical systems based on the sign of the eigenvalues, we can characterize the nature of the \textit{pseudo-singular point} $M$ of the ``reduced vector field'' (\ref{eq16}). Let's note $\Delta$ and $T$ respectively the determinant and the trace of the functional Jacobian matrix associated with system (\ref{eq16}) and $S = \sum \limits_{i=1}^3 J_{ii} $ where $J_{ii}$ is the minor obtained by removing the $i^{th}$ row and the $i^{th}$ column in the functional Jacobian matrix.
The discriminant of the \textit{characteristic polynomial} of the functional Jacobian matrix reads:

\[
R = 4P^3 + 27 Q^2 \mbox{\quad with \quad} P = S - \dfrac{T^2}{3} \mbox{\quad and \quad} Q = -\dfrac{2T^3}{27} + \dfrac{T S}{3} - \Delta
\]

Then, the \textit{pseudo-singular point} $M$:

\begin{itemize}

\item  a \textit{saddle} if and only if $R < 0$, i.e. $S < \dfrac{T^2}{3}$ and $\Delta < 0$.

\item  a \textit{node} if and only if $R < 0$ and $\Delta > 0$.

\item  a \textit{focus} if and only if $R > 0$.

\end{itemize}

Thus, we can extend Beno\^{i}t's Theorem \ref{theo4} to dimension four.

\begin{theorem}
\label{theo5} \hfill \\
If the ``reduced vector field'' {\rm (\ref{eq17})} has a pseudo-singular point of saddle type\footnote{In dimension three, a saddle point is a singular point having its three eigenvalues real but ``not all with the same sign''. See Poincar\'{e} [1886, p. 154].}, then system {\rm (\ref{eq15})} exhibits a canard solution which evolves from the attractive part of the slow manifold towards its repelling part.

\end{theorem}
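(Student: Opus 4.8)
The plan is to mimic, step by step, the argument behind Benoît's Theorem~\ref{theo4}, with the planar desingularized reduced field replaced by the three-dimensional one~(\ref{eq17}). \emph{First}, put $\varepsilon=0$ in~(\ref{eq15}) and restrict to the critical manifold $S_0=\{g_1=0\}$. Away from the cusp one has $\partial_{x_1}g_1\neq0$, so $S_0$ is a graph $x_1=\phi(x_2,x_3,y_1)$, and differentiating the identity $g_1\equiv0$ along the flow gives $\partial_{y_1}g_1\,\dot y_1=-\bigl(\partial_{x_1}g_1\,f_1+\partial_{x_2}g_1\,f_2+\partial_{x_3}g_1\,f_3\bigr)$. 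Hence the slow flow on $S_0$, written in the coordinates $(x_2,x_3,y_1)$, is singular exactly on the fold $\{\partial_{y_1}g_1=0\}$; rescaling the slow time by $-\partial_{y_1}g_1$ removes the singularity and yields precisely system~(\ref{eq17}). This rescaling preserves orientation on the attracting sheet $\{\partial_{y_1}g_1<0\}$ and reverses it on the repelling sheet $\{\partial_{y_1}g_1>0\}$, and, by Definition~\ref{def4}, the pseudo-singular points are exactly the equilibria of~(\ref{eq17}) lying on the fold.

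\emph{Second}, one exhibits the \emph{singular} canard. By a singular canard through the pseudo-singular point $M$ we mean an orbit of the slow flow on $S_0$ that reaches the fold at $M$ coming from the attracting sheet and then continues onto the repelling sheet; after the time rescaling this is a trajectory of~(\ref{eq17}) that crosses the fold surface transversally at the equilibrium $M$. When $M$ is of saddle type, i.e. its linearization has real eigenvalues not all of the same sign, the stable and unstable manifolds of $M$ are not contained in the fold, and a suitable concatenation of one branch of each produces an orbit of~(\ref{eq17}) running from the attracting sheet, through $M$, to the repelling sheet --- a singular maximal canard. For a pseudo-singular focus, by contrast, the fold can be reached only with infinite rotation and no such orbit exists, which is precisely why saddle type is the natural hypothesis, in parallel with Benoît's planar trichotomy recalled in Section~\ref{Sec3}.

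\emph{Third}, one promotes the singular canard to a genuine canard of~(\ref{eq15}) for $0<\varepsilon\ll1$. Away from the fold, Fenichel's theory (recalled in the Introduction) provides the attracting and repelling slow manifolds $M_a$ and $M_r$ as $O(\varepsilon)$-perturbations of the two sheets of $S_0$, together with their invariant stable and unstable fibrations. To continue $M_a$ and $M_r$ across the non-hyperbolic fold near $M$ one blows up $M$ in the extended phase space (with $\varepsilon$ adjoined as a dependent variable): in the central rescaling chart the blown-up vector field agrees, to leading order, with the desingularized reduced field~(\ref{eq17}), whose saddle invariant manifolds at $M$ --- now normally hyperbolic --- carry the continuations of $M_a$ and $M_r$ through the fold, while the entry and exit charts are handled by an exchange-lemma argument along the normally hyperbolic sheets. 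One then checks that the two continuations meet transversally along a one-dimensional curve, the maximal canard; since the transversality comes from the hyperbolic splitting at $M$, it persists for all sufficiently small $\varepsilon>0$, and the corresponding orbit of~(\ref{eq15}) follows $M_a$, passes near $M$, and then follows $M_r$, which is the asserted canard solution.

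\emph{The main obstacle} is the blow-up and matching analysis at $M$, which is the only place where dimension four genuinely departs from Theorem~\ref{theo4}. Here the fold is a surface and, generically, the pseudo-singular locus is itself a curve, so the linearization of~(\ref{eq17}) at $M$ necessarily has a zero eigenvalue tangent to that curve; the saddle behaviour driving the canard concerns only the two remaining eigenvalues, and the blow-up must be chosen so as to resolve this extra centre direction as well. Concretely one has to (i) isolate the essential two-dimensional dynamics transverse to the pseudo-singular curve and verify that the saddle inequalities there do force a transverse crossing of the fold, and (ii) check that the Fenichel fibrations glue consistently across the blow-up charts, so that the two continued manifolds can actually be compared. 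The remaining ingredients --- the reduction to the slow flow, the desingularization, the Fenichel estimates --- are a routine transcription of the three-dimensional case, or, equivalently, of Benoît's non-standard construction.
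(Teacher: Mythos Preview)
Your proposal is correct in spirit and, in fact, far more detailed than the paper's own proof, which consists of the single sentence ``Proof is based on the same arguments as previously'' --- i.e.\ a bare deferral to Beno\^{i}t's three-dimensional argument (itself deferred to Beno\^{i}t [1983]). So in a sense you and the paper take the \emph{same} route: both claim that the $\mathbb{R}^3$ machinery carries over. The difference is that the paper asserts this without elaboration, whereas you actually unpack what ``the same arguments'' would have to mean in dimension four.

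Two points of comparison are worth noting. First, your sketch is cast in the geometric singular perturbation\,/\,blow-up framework (Fenichel, exchange lemma, rescaling charts) rather than in Beno\^{i}t's original non-standard-analysis language; you acknowledge this equivalence at the end, and it is the route taken by Szmolyan--Wechselberger and by Wechselberger [2012], which the paper itself cites in its acknowledgements as having already established the general $k$-slow, $m$-fast case. Second, you correctly isolate the genuinely new difficulty in dimension four: the fold is now a surface, the pseudo-singular locus is generically a curve, and the linearization of~(\ref{eq17}) at $M$ acquires a structural zero eigenvalue along that curve. This is exactly what the paper encounters in its four-dimensional Chua example, where $\Delta=0$ forces one eigenvalue to vanish and the saddle condition is imposed on the remaining two via the sign of $S=\lambda_2\lambda_3$. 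Your observation that the blow-up must resolve this extra centre direction, and that the saddle inequalities concern only the transverse pair, is precisely the substantive content that the paper's one-line proof omits.

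In short: your proposal is not a different proof but a fleshed-out version of the proof the paper gestures at, and your identification of the ``main obstacle'' is both accurate and more honest about where the work lies than the paper itself.
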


\begin{proof}
Proof is based on the same arguments as previously.
\end{proof}

\subsection{Chua's system}\hfill \\

Let's consider the system introduced by Thamilmaran \textit{et al.} [2004]:

\begin{equation}
\label{eq18}
\begin{aligned}
 \dot{x} & = \beta_1 \left( z - x - u \right), \hfill \\
 \dot{y} & = \beta_2 z, \hfill \\
 \dot{z} & = -\alpha_2 z - y - x, \hfill \\
 \varepsilon \dot{u} & = x - k(u), \hfill
\end{aligned}
\end{equation}

where $k(u) = c_1 u^3 + c_2 u$, $\varepsilon = 1/\alpha_1$, $\alpha_2$, $c_{1,2}$ and $\beta_{1,2}$ are constant.\\

According to Eq. (\ref{eq17}) the reduced vector field reads:

\begin{equation}
\label{eq19}
\begin{aligned}
 \dot{y} & = - \beta_2 \left( - 3c_1 u^2 - c_2 \right)z, \hfill \\
 \dot{z} & = - \left( - 3c_1 u^2 - c_2 \right) \left(-y - c_1 u^3 - c_2 u - \alpha_2 z \right), \hfill \\
 \dot{u} & = \beta_1 \left( - u + z - c_1 u^3 - c_2 u \right).
\end{aligned}
\end{equation}

By Def. \ref{def4} the singularly perturbed dynamical system (\ref{eq18}) admits:

\[
M(0, \pm \frac{1}{3} \sqrt{\frac{- c_2}{3c_1}} (3 + 2 c_2), \pm \sqrt{\frac{-c_2}{3c_1}})
\]

as \textit{pseudo-singular points}. From the functional Jacobian matrix of system (\ref{eq19}) evaluated at $M$ we compute the \textit{characteristic polynomial} from which we deduce that:

\[
R=-\dfrac{4}{27} \beta_1^3 c_2^2 (3 \alpha_2 + 2 c_2 (1 + \alpha_2))^2 (8 c_2 (3 \alpha_2 + 2 c_2 (1 + \alpha_2)) + 3 \beta_1).
\]

\smallskip

In the parameter set used in system (\ref{eq18}) $\beta_1 > 0$ and $c_2 < 0$.\\

So, $R < 0$ provided that:

\[
8 c_2 (3 \alpha_2 + 2 c_2 (1 + \alpha_2)) + 3 \beta_1 < 0 \quad \Leftrightarrow \quad \alpha_2 < \dfrac{-16 c_2^2 - 3 \beta_1}{8 c_2 (3 + 2 c_2)}
\]

From the functional Jacobian matrix we also deduce that:

\[
\Delta = 0
\]

This implies that one of the three (real) eigenvalues (say $\lambda_1$) is null. So, in order to have a \textit{pseudo-singular saddle point} the two remaining eigenvalues (say $\lambda_2$ and $\lambda_3$) must be of different sign.\\

But, since $S = \lambda_1\lambda_2 + \lambda_1\lambda_3 + \lambda_2\lambda_3$ it means that $S = \lambda_2\lambda_3 < 0$. Thus, we may have

\[
S = \lambda_{2} \lambda_{3} = -\frac{2}{3}  \beta_1 c_2 (3 \alpha_2 + 2 c_2 (1 + \alpha_2)) < 0 \quad \Leftrightarrow \quad \alpha_2 < \dfrac{-2 c_2}{3 + 2 c_2}
\]

\smallskip

Combining the two required conditions, i.e., $R < 0$ and $\Delta < 0$ ($S < 0$ in this case) we find that:

\[
\alpha_2 < \dfrac{-2 c_2}{3 + 2 c_2} < \dfrac{-16 c_2^2 - 3 \beta_1}{8 c_2 (3 + 2 c_2)} = \dfrac{-2 c_2}{3 + 2 c_2} + \dfrac{- 3 \beta_1}{8 c_2 (3 + 2 c_2)}.
\]

\smallskip

So, according to Th. \ref{theo5}, if $\alpha_2 < -2c_2/(3 + 2 c_2)$, then $M$ is a \textit{pseudo-singular saddle point} and
so system (\ref{eq18}) exhibits canards solution.\\

Thus, ``canards'' solutions are observed in Chua's system (\ref{eq18}) for $\alpha_2 < -2c_2 / (3 +2 c_2)$ as exemplified in Fig. 3 in which such solutions passing through the \textit{pseudo-singular saddle point} $M$ have been plotted for parameter set $\varepsilon = 1 / \alpha_1 = 1 / 10.1428 = 0.098592$ ; $\alpha_2 = 0.9$ ; $\beta_1 = 0.121$ ; $\beta_2 = 0.0047$ ; $c_1 = 0.393781$ ; $c_2 = -0.72357$ in the $(u,z,x)$ phase space.

\begin{figure}[htbp]
\centerline{\includegraphics[width=10cm,height=10cm]{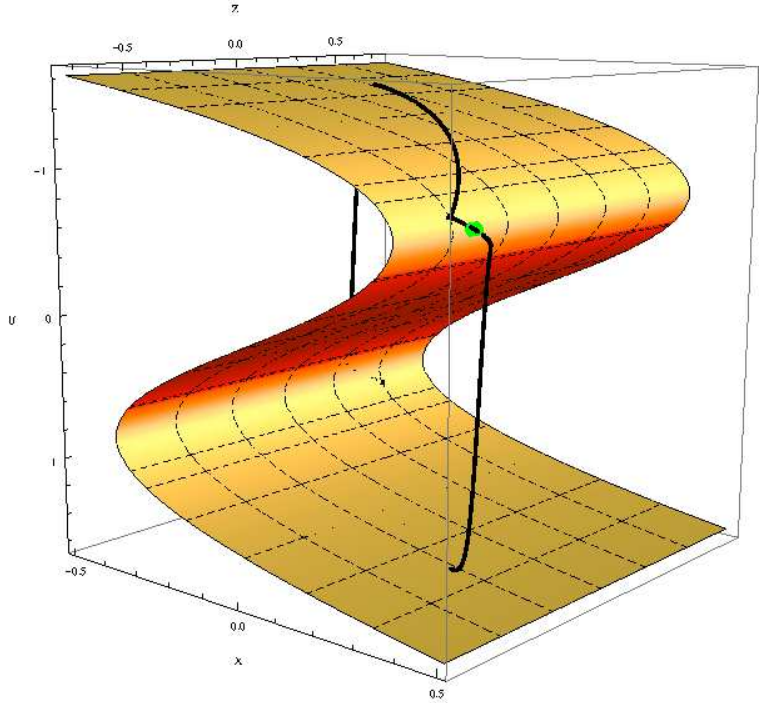}}
\caption{Canards solutions of Chua's system (\ref{eq18}).} \label{Fig3}
\end{figure}

\newpage

\section{Flow curvature method}
\label{Sec5}

A new approach called \textit{Flow Curvature Method} based on the use of \textit{Differential Geometry} properties of
\textit{curvatures} has been recently developed by Ginoux \textit{et al.} [2008] and Ginoux [2009]. According to this method, the highest \textit{curvature of the flow}, i.e. the $(n - 1)^{th}$ \textit{curvature} of \textit{trajectory curve} integral of $n$-dimensional dynamical system defines a \textit{manifold} associated with this system and called \textit{flow curvature manifold}.

\begin{definition}
\label{def5} \hfill \\
The location of the points where the $(n - 1)^{th}$ \textit{curvature of the flow}, i.e. the \textit{curvature of the trajectory curve} $\vec {X}$, integral of any $n$-dimensional singularly perturbed dynamical system {\rm (\ref{eq5})} vanishes, defines a $(n - 1)$-dimensional \textit{flow curvature manifold} the equation of which is:

\begin{equation}
\label{eq20} \phi ( {\vec {X}} ) = \dot { \vec
{X} } \cdot ( {\ddot {\vec {X}} \wedge \dddot{\vec {X}}\wedge \ldots
\wedge \mathop {\vec {X}}\limits^{\left( n \right)} } ) = \det( {
\dot {\vec {X}},\ddot {\vec {X}}, \dddot{\vec {X}},\ldots ,\mathop
{\vec {X}}\limits^{\left( n \right)} } ) = 0
\end{equation}

where $\mathop {\vec {X}}\limits^{\left( n \right)}$ represents the time derivatives up to order $n$ of $\vec {X}  = (\vec{x}, \vec{y})^t$.

\end{definition}

\subsection{Three-dimensional singularly perturbed systems} \hfill \\

According to the \textit{Flow Curvature Method} the \textit{flow curvature manifold} of the \textit{reduced vector field} (\ref{eq9}) is defined by:

\begin{equation}
\label{eq21} \phi ( \vec{X} ) = \det( \dot {\vec {X}},\ddot {\vec {X}} ) = 0
\end{equation}

where $\vec {X}  = (x_2, y_1)^t$.

\smallskip

We suppose that the \textit{flow curvature manifold} $\phi(x_2, y_1)$ admits at $M(x_2^*,y_1^*)$ an \textit{extremum} such that: $\partial_{x_2} \phi = \partial_{y_1} \phi =0$.\\

The Hessian matrix of the manifold $\phi (x_2, y_1)$ is defined, provided that all the second partial derivatives of $\phi$ exist, by

\begin{equation}
\label{eq22}
H_{\phi (x_2, y_1)} = \begin{pmatrix}
\dfrac{\partial^2\phi}{\partial x^2_2 } & \dfrac{\partial^2\phi}{\partial x_2 \partial y_1 } \vspace{6pt} \\
\dfrac{\partial^2\phi}{\partial y_1 \partial x_2 }  &  \dfrac{\partial^2\phi}{\partial y^2_1 }\\
\end{pmatrix}.
\end{equation}

\smallskip

Then, according to the so-called \textit{Second Derivative Test} (see for example Thomas \& Finney [1992]) and by noticing

\begin{equation}
\label{eq23}
D_1 = \dfrac{\partial^2 \phi}{\partial x^2_2} \mbox{,} \quad
D_2 =
\begin{vmatrix}
\dfrac{\partial^2 \phi}{\partial x^2_2} & \dfrac{\partial^2 \phi}{\partial x_2 \partial y_1} \vspace{4pt} \\
\dfrac{\partial^2 \phi}{\partial y_1 \partial x_2} & \dfrac{\partial^2 \phi}{\partial y^2_1}
\end{vmatrix}
\end{equation}

\newpage

if $D_2 \neq 0$, the \textit{flow curvature manifold} (\ref{eq21}) admits $M(x_2^*,y_1^*)$ as a \\

\begin{itemize}

\item \textit{local minimum}, if and only if $(D_1,D_2) = (+,+)$,

\item \textit{local maximum}, if and only if $(D_1,D_2) = (-,+)$, and

\item \textit{saddle-point}, if and only if $D_2 < 0$.\\

\end{itemize}

Thus, we have the following proposition:

\begin{proposition}
\label{prop1} \hfill \\
If the flow curvature manifold of the ``reduced vector field'' {\rm (\ref{eq9})} admits a pseudo-singular point of saddle-type, then system {\rm (\ref{eq7}) } exhibits a canard solution which evolves from the attractive part of the slow manifold towards its repelling part.
\end{proposition}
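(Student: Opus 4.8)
The plan is to deduce Proposition~\ref{prop1} from Beno\^{i}t's Theorem~\ref{theo4}, by proving that the ``Second Derivative Test'' applied to the flow curvature manifold $\phi=0$ at a pseudo-singular point $M$ reproduces exactly Beno\^{i}t's eigenvalue criterion for a pseudo-singular saddle of the reduced vector field~(\ref{eq9}). Write~(\ref{eq9}) as $\dot{\vec{X}}=\vec{V}(\vec{X})$ with $\vec{X}=(x_2,y_1)^t$. The first step is the observation that a pseudo-singular point $M$ is an equilibrium of~(\ref{eq9}): the second condition in~(\ref{eq8}), $\partial_{y_1}g_1=0$, forces $\dot{x_2}=-f_2\,\partial_{y_1}g_1=0$ at $M$, while the third condition is literally $\dot{y_1}=0$. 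Hence $\vec{V}(M)=\vec{0}$; since $\ddot{\vec{X}}=D\vec{V}(\vec{X})\,\vec{V}(\vec{X})$ we also have $\ddot{\vec{X}}(M)=\vec{0}$, so $\phi(M)=\det(\vec{0},\ddot{\vec{X}}(M))=0$ and $M$ lies on the flow curvature manifold~(\ref{eq21}). Expanding $\vec{V}$ near $M$ as $\vec{V}(\vec{X})=A\vec{u}+O(|\vec{u}|^2)$ with $\vec{u}=\vec{X}-M$ and $A=D\vec{V}(M)$, and using bilinearity of the $2\times 2$ determinant, one obtains $\phi(\vec{X})=\det(A\vec{u},A^2\vec{u})+O(|\vec{u}|^3)=(\det A)\,\det(\vec{u},A\vec{u})+O(|\vec{u}|^3)$. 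Thus $\phi$ vanishes to second order at $M$, so $\nabla\phi(M)=\vec{0}$: $M$ is a critical point of $\phi$, and the test~(\ref{eq22})--(\ref{eq23}) is applicable there.

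The second step is to read off the Hessian. With $A=\bigl(\begin{smallmatrix}a&b\\c&d\end{smallmatrix}\bigr)$ one has $\det(\vec{u},A\vec{u})=c\,u_1^2+(d-a)\,u_1u_2-b\,u_2^2$, hence the Hessian of $\phi$ at $M$ is $\Delta\bigl(\begin{smallmatrix}2c&d-a\\d-a&-2b\end{smallmatrix}\bigr)$, where $\Delta=\det A$ and $T=a+d$ are the determinant and trace of the Jacobian of~(\ref{eq9}) at $M$, as in Section~\ref{Sec3}. Therefore $D_1=2c\,\Delta$ and, using $(a-d)^2+4bc=T^2-4\Delta$, $D_2=\Delta^2\bigl(-4bc-(a-d)^2\bigr)=\Delta^2\,(4\Delta-T^2)$. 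Since $\Delta\neq0$ at a nondegenerate pseudo-singular point, $D_2<0$ is equivalent to $\Delta<T^2/4$; combined with $\Delta<0$ this is precisely the characterization of a pseudo-singular saddle of~(\ref{eq9}) recalled in Section~\ref{Sec3}. Consequently, at such an $M$ system~(\ref{eq7}) admits, by Theorem~\ref{theo4}, a canard which evolves from the attracting to the repelling branch of the slow manifold.

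The hard part is the bookkeeping around the sign of $\Delta$. The Second Derivative Test assigns the label ``saddle-point'' whenever $D_2<0$, i.e.\ whenever $\Delta<T^2/4$, and this covers both the pseudo-singular saddle ($\Delta<0$) and the pseudo-singular node ($0<\Delta<T^2/4$); conversely $D_2>0$, i.e.\ $\Delta>T^2/4$, is exactly the pseudo-singular focus. Hence $D_2<0$ alone does not single out Beno\^{i}t's saddle, and closing the argument requires either retaining $\Delta<0$ as part of the meaning of ``saddle-type'', or invoking the fact that a pseudo-singular node also produces canards while a pseudo-singular focus produces none, so that ``$\phi$ has a saddle at $M$'' still corresponds to the existence of canards. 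One must moreover keep the standing genericity hypotheses that make everything well posed at $M$: $\partial_{x_1}g_1\neq0$, so that $x_1$ is an implicit function of $(x_2,y_1)$ and both~(\ref{eq9}) and $\phi$ are defined, and $\Delta\neq0$, so that $D_2\neq0$ and the test is conclusive (the degenerate cases $\Delta=0$ and $\Delta=T^2/4$ being thereby excluded). Granting these, the equivalence ``$M$ is a saddle-type critical point of $\phi$'' $\Longleftrightarrow$ ``$M$ is a pseudo-singular saddle of~(\ref{eq9})'' holds, and Proposition~\ref{prop1} follows from Theorem~\ref{theo4}.
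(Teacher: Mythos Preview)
Your argument is correct and follows the same overall strategy as the paper: show that the Hessian determinant $D_2$ of the flow curvature function $\phi$ at the pseudo-singular point $M$ equals $-\Delta^2(T^2-4\Delta)$, so that the Second Derivative Test reproduces Beno\^{i}t's eigenvalue criterion and Theorem~\ref{theo4} applies. The paper reaches the same formula by invoking the Hartman--Grobman theorem to pass to the eigenbasis and then computing $\phi=x_1x_2\lambda_1\lambda_2(\lambda_2-\lambda_1)$ for the diagonal linear system, whereas you work directly in the original coordinates via a Taylor expansion $\vec V(\vec X)=A\vec u+O(|\vec u|^2)$ and the identity $\det(A\vec u,A^2\vec u)=(\det A)\det(\vec u,A\vec u)$. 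Your route is arguably cleaner, since Hartman--Grobman only guarantees a \emph{topological} conjugacy, which need not respect the differential-geometric object $\phi$; your computation sidesteps this issue entirely. You also make explicit two points the paper leaves implicit: first, that $M$ is genuinely a critical point of $\phi$ (so the test applies), and second, that $D_2<0$ by itself captures both the pseudo-singular saddle and the pseudo-singular node, so that the condition $\Delta<0$ must be retained (or Beno\^{i}t's node result~[2001] invoked) to close the logical gap between ``saddle of $\phi$'' and ``canard exists''. The paper's proof ends at ``$D_2<0\Rightarrow M$ is a saddle-point'' without disentangling these two notions of saddle.
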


\begin{proof}
According to the theorem of Hartman-Grobman [1964] the flow of any dynamical system (\ref{eq9}) is \textit{locally topologically conjugated} to the flow of the linearized system in the vicinity of \textit{fixed points}. So, let's consider the linearized system in the basis of the eigenvectors:

\[
\begin{aligned}
 \dot{x_1} & = \lambda_1 x_1, \hfill \\
 \dot{x_2} & = \lambda_2 x_2.
\end{aligned}
\]

where $\lambda_{1,2}$ are the eigenvalues of the functional Jacobian matrix.
The \textit{flow curvature manifold} (\ref{eq21}) associated with this linearized system reads:

\[
\phi ( \vec{X} ) = \det( \dot {\vec {X}},\ddot {\vec {X}} ) = x_1 x_2 \lambda_1 \lambda_2 (\lambda_2 -  \lambda_1)
\]

\smallskip

Then, it's easy to check that the determinant $D_2$ of the Hessian may be written as:

\[
D_2 = - \Delta^2 \left( T^2 - 4 \Delta \right)
\]

\smallskip

from which we deduce that if $D_2 < 0$ then $M$ is a \textit{saddle-point} provided that $T = \lambda_1 + \lambda_2 $ and $\Delta = \lambda_1 \lambda_2$ are not null. \end{proof}

\begin{remark}
This idea corresponds to topographic system introduced by Poincar\'{e} {\rm [1881-1886]} in his memoirs entitled: ``Sur les courbes d\'{e}finies par une \'{e}quation diff\'{e}rentielle''. Topographic system consists in using level set such as $f\left(x_1, x_2 \right) = constant$ surrounding fixed points in order to define their nature (node, saddle, foci) and their stability. Moreover, Prop. \ref{prop1} leads to the same kind of result as that obtained by Szmolyan \textit{et al.} {\rm [2001]} but without needing to make a change of variables on system {\rm (\ref{eq7})} other than that proposed by Beno\^{i}t {\rm [1983, 2001]}.
\end{remark}

The \textit{Flow Curvature Method} has been successfully used by the Ginoux \textit{et al.} [2011] for computing the bifurcation parameter value leading to a canard explosion in dimension two already obtained according to the so-called \textit{Geometric Singular Perturbation Method}.

\newpage

\subsection{Chua's system}\hfill \\

Let's consider again the system (\ref{eq10}) of Itoh \& Chua [1992]

\[
\begin{aligned}
\dot{x} & = z - y, \\
\dot{y} & = \alpha(x + y), \\
\varepsilon \dot{z} & = -x - k(z),
\end{aligned}
\]

where $k(z) = z^3/3 - z$ and $\alpha$ is a constant.

The reduced vector field (\ref{eq11}):

\[
\begin{aligned}
\dot{y} & = \alpha k'(z)(-k(z) + y) = \alpha (z^2 - 1)\left(- \dfrac{z^3}{3} + z + y\right), \\
\dot{z} & = y - z.
\end{aligned}
\]

The \textit{flow curvature manifold} (\ref{eq21}) associated with this reduced vector field reads:

\[
\begin{aligned}
\phi(y,z) = & \dfrac{\alpha}{9} [-3 (y - z) (6 y^2 z + 4 z^3 (-2 + z^2) + y (-6 + 9 z^2 - 5 z^4)) \\
& + z (-6 + z^2) (-1 + z^2)^2 (-3 y - 3 z + z^3) \alpha ] = 0.
\end{aligned}
\]

Proposition \ref{prop1} enables to state that the determinant of the Hessian evaluated at $(y^*, z^*)=(\pm 1, \pm 1)$ becomes

\[
D_2 = -\dfrac{100}{27} \alpha^2 (3 + 40 \alpha),
\]

from which one deduces that if $3 + 40 \alpha > 0$ then $M$ is a \textit{pseudo-singular saddle point} and so systems (\ref{eq10})
exhibits a canard solution. Thus, we find Beno\^{i}t's result according to the \textit{Flow Curvature Method}.

\subsection{Four-dimensional singularly perturbed systems}\hfill \\

According to the \textit{Flow Curvature Method} the \textit{flow curvature manifold} of the \textit{reduced vector field} (\ref{eq17}) is defined by:

\begin{equation}
\label{eq24} \phi ( \vec{X} ) = \det( \dot {\vec {X}},\ddot {\vec {X}}, \dddot {\vec {X}} ) = 0
\end{equation}

where $\vec {X}  = (x_2, x_3, y_1)^t$.

\smallskip

We suppose that the \textit{flow curvature manifold} $\phi(x_2,x_3, y_1)$ admits at $M(x_2^*,x_3^*,y_1^*)$ an \textit{extremum} such that: $\partial_{x_2} \phi = \partial_{x_3} \phi = \partial_{y_1} \phi =0$.\\

The Hessian matrix of the manifold $\phi (x_2,x_3,y_1)$ is defined, provided that all the second partial derivatives of $\phi$ exist, by

\begin{equation}
\label{eq25}
H_{\phi (x_2,x_3,y_1)} = \begin{vmatrix}
\dfrac{\partial^2\phi}{\partial x^2_2 } & \dfrac{\partial^2\phi}{\partial x_2 \partial x_3 }  & \dfrac{\partial^2\phi}{\partial x_2 \partial y_1 } \vspace{6pt} \\
\dfrac{\partial ^2\phi}{\partial x_3 \partial x_2 }  &  \dfrac{\partial^2\phi}{\partial x^2_3 } & \dfrac{\partial^2\phi}{\partial x_3 \partial y_1 } \vspace{6pt} \\
\dfrac{\partial ^2\phi}{\partial y_1 \partial x_2 } & \dfrac{\partial^2\phi}{\partial y_1 \partial x_3 } & \dfrac{\partial^2\phi}{\partial y^2_1 }
\end{vmatrix}.
\end{equation}

Then, according to the so-called \textit{Second Derivative Test} and while noticing $D_1$ the determinant of the upper left $1 \times 1$ submatrix of $H_{\phi}$, $D_2$ the determinant of the $2 \times 2$ matrix of $H_{\phi}$ defined as:

\begin{equation}
\label{eq26}
D_1 = \dfrac{\partial^2 \phi}{\partial x^2_2} \mbox{,} \quad
D_2 =
\begin{vmatrix}
\dfrac{\partial^2 \phi}{\partial x^2_2} & \dfrac{\partial^2 \phi}{\partial x_2 \partial x_3} \vspace{4pt} \\
\dfrac{\partial^2 \phi}{\partial x_3 \partial x_2} & \dfrac{\partial^2 \phi}{\partial x^2_3}
\end{vmatrix},
\end{equation}

by $D_3$ the determinant of the $3 \times 3$ matrix of $H_{\phi}$ defined as:

\begin{equation}
\label{eq27}
D_3 = \begin{vmatrix}
\dfrac{\partial^2\phi}{\partial x^2_2 } & \dfrac{\partial^2\phi}{\partial x_2 \partial x_3 }  & \dfrac{\partial^2\phi}{\partial x_2 \partial y_1 } \vspace{6pt} \\
\dfrac{\partial ^2\phi}{\partial x_3 \partial x_2 }  &  \dfrac{\partial^2\phi}{\partial x^2_3 } & \dfrac{\partial^2\phi}{\partial x_3 \partial y_1 } \vspace{6pt} \\
\dfrac{\partial ^2\phi}{\partial y_1 \partial x_2 } & \dfrac{\partial^2\phi}{\partial y_1 \partial x_3 } & \dfrac{\partial^2\phi}{\partial y^2_1 }
\end{vmatrix},
\end{equation}

\smallskip

if $D_3 \neq 0$, the \textit{flow curvature manifold} (\ref{eq24}) admits $M(x_2^*,x_3^*,y_1^*)$ as a \\

\begin{itemize}

\item \textit{local minimum}, if and only if $(D_1,D_2,D_3) = (+,+,+)$

\item \textit{local maximum}, if and only if $(D_1,D_2,D_3) = (-,+,-)$

\item \textit{saddle-point}, in all other cases.\\

\end{itemize}

So, we have the following proposition.

\begin{proposition}
\label{prop2} \hfill \\
If the flow curvature manifold of the ``reduced vector field'' {\rm (\ref{eq17})} admits a pseudo-singular saddle-point, then system {\rm (\ref{eq15})} exhibits a canard solution which evolves from the attractive part of the slow manifold towards its repelling part.
\end{proposition}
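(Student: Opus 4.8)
The plan is to carry the proof of Proposition~\ref{prop1} up one dimension, replacing the $2\times 2$ Second Derivative Test by the $3\times 3$ test of \eqref{eq25}--\eqref{eq27}. Since, by Definition~\ref{def4}, a pseudo-singular point $M$ is an equilibrium of the reduced vector field \eqref{eq17}, the Hartman--Grobman theorem allows us to replace \eqref{eq17} near $M$ by its linear part $\dot{\vec\xi}=A\vec\xi$, $\vec\xi=\vec X-M$, where $A$ is the functional Jacobian matrix of \eqref{eq17} at $M$. Assuming the three eigenvalues $\lambda_1,\lambda_2,\lambda_3$ of $A$ to be real and pairwise distinct and working in the corresponding eigenbasis, the linear system reads $\dot u_i=\lambda_i u_i$, so that $\dot{\vec X},\ddot{\vec X},\dddot{\vec X}$ have $i$th components $\lambda_i u_i,\lambda_i^2 u_i,\lambda_i^3 u_i$. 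Pulling $\lambda_i u_i$ out of the $i$th column of \eqref{eq24} and recognising a Vandermonde determinant gives
\[
\phi(\vec X)=\det\bigl(\dot{\vec X},\ddot{\vec X},\dddot{\vec X}\bigr)=C\,u_1u_2u_3,\qquad C:=\lambda_1\lambda_2\lambda_3\!\!\prod_{1\le i<j\le 3}\!\!(\lambda_j-\lambda_i).
\]
Since $\prod_{i<j}(\lambda_i-\lambda_j)^2$ is the discriminant of the characteristic polynomial $\mu^3-T\mu^2+S\mu-\Delta$ of $A$, which equals $-R$ in the notation of Section~\ref{Sec4}, one has $C^2=-\Delta^2 R$; in particular $C$ is real and nonzero exactly when $R<0$, i.e.\ exactly when the eigenvalues are real and distinct, which is half of the characterisation of a pseudo-singular saddle recalled before Theorem~\ref{theo5}.

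The second and decisive step is to read off the leading minors $D_1,D_2,D_3$ of the Hessian \eqref{eq25} of $\phi$ at $M$ and to check that the ``saddle'' case of the Second Derivative Test coincides with the condition of Theorem~\ref{theo5}. This is precisely where the planar proof cannot be copied, because the linear model is doubly degenerate: the homogeneous \emph{cubic} $\phi=C\,u_1u_2u_3$ has all its second partial derivatives equal to zero at $M$, so $D_1=D_2=D_3=0$; and, at a pseudo-singular point, the first two rows of $A$ are both proportional to the gradient of $\partial_{y_1}g_1$ (immediate from \eqref{eq17} and the fold relation $\partial_{y_1}g_1=0$), so that $A$ has rank at most $2$ and $\Delta=\det A=0$, hence $C=0$ as well. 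One must therefore apply the test to the \emph{genuine}, nonlinear flow curvature manifold: writing $\dot{\vec X}=A\vec\xi+\tfrac12 B(\vec\xi,\vec\xi)+\cdots$, with $B$ encoding the second derivatives of $\vec f$ and $\vec g$ at $M$, and likewise for $\ddot{\vec X},\dddot{\vec X}$, and expanding \eqref{eq24} multilinearly, the pure cubic $\det(A\vec\xi,A^2\vec\xi,A^3\vec\xi)=(\det A)\,\det(\vec\xi,A\vec\xi,A^2\vec\xi)$ vanishes because $\det A=0$, so the quadratic part of $\phi$ at $M$ --- and hence $D_3$ --- is produced entirely by the terms carrying one factor $B$. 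The objective is then to show, after simplification using the two defining identities \eqref{eq16} of a pseudo-singular point, that $D_3$ factors as a manifestly non-negative quantity (built from $\nabla(\partial_{y_1}g_1)$ and from $B$) times a single sign-carrying factor controlled by $R$ and by $S=\lambda_2\lambda_3$, the cubic analogue of the planar identity $D_2=-\Delta^2\bigl(T^2-4\Delta\bigr)$ used for Proposition~\ref{prop1}; the Hessian is then indefinite exactly on the set $R<0$, $S<0$ that Theorem~\ref{theo5} isolates as the pseudo-singular saddle region (recall that $\Delta=0$ here, so $S<0$ is the operative surrogate for $\Delta<0$).

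Granting that identification, the conclusion follows as for Proposition~\ref{prop1}: by Hartman--Grobman the flow of \eqref{eq17} near $M$ is topologically conjugate to that of its linearisation, whose level sets $\{\phi=\text{const}\}$ form, in Poincar\'{e}'s sense, a topographic system around $M$; a saddle-type organising centre of that system forces the attracting branch $M_a$ and the repelling branch $M_r$ of the slow invariant manifold of \eqref{eq15} to intersect transversally near $M$, and such an intersection is, as recalled in the Remark of Section~\ref{Intro}, a maximal canard. Hence \eqref{eq15} exhibits a canard solution passing from $M_a$ to $M_r$ through $M$. The main obstacle is the middle step: the degeneracy of the linear model, aggravated by the structural vanishing $\det A=0$, forces the whole $D_3$ computation to be carried out with the second-order jet of the full vector field, and it is only the cancellations produced by $\det A=0$ together with the pseudo-singularity relations \eqref{eq16} that can be expected to collapse a long symmetric-function expression into the single sign --- that of the discriminant $R$ --- reproducing the Beno\^{i}t-type criterion of Theorem~\ref{theo5}.
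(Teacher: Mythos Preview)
Your opening move---linearise via Hartman--Grobman, pass to the eigenbasis, and compute $\phi=C\,u_1u_2u_3$ with $C^{2}=-\Delta^{2}R$---is exactly the paper's argument; the paper records the outcome as ``$D_3\propto-2\Delta^{2}R$'' and stops there. Where you part company with the paper is in noticing that this cannot literally be the Hessian determinant of the linear model at $M$: the cubic $C\,u_1u_2u_3$ has vanishing second partials at the origin, so $D_1=D_2=D_3=0$ there, and your rank argument on the first two rows of $A$ (both proportional to $\nabla(\partial_{y_1}g_1)$ once $\partial_{y_1}g_1=0$) correctly gives $\Delta=\det A=0$ at every pseudo-singular point, which kills $C$ as well. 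Both observations are sound; the paper does not confront either one and simply asserts the formula.

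Your remedy---retain the quadratic part $B$ of the reduced field and extract the Hessian of the true $\phi$ from the mixed $A$/$B$ terms in the multilinear expansion of $\det(\dot{\vec X},\ddot{\vec X},\dddot{\vec X})$---is the natural next step, and your expectation that the relations \eqref{eq16} together with $\det A=0$ should collapse the result to a sign governed by $R$ and $S$ is plausible (and is consistent with the paper's own four-dimensional Chua example, where $\Delta=0$ and the saddle test reduces to $S<0$). But you explicitly do not carry out this calculation (``granting that identification''), and that is the gap: the proposal is an honest outline that correctly diagnoses why the purely linear shortcut fails, yet the decisive identity linking $D_3$ to $R$ (or to $S$ when $\Delta=0$) is left unproved. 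In short, you follow the paper's route, see further along it than the paper does, and then stop at the same unbridged step---only you name the chasm.
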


\begin{proof}
According to Hartman-Grobman's Theorem [1964] the flow of any dynamical system (\ref{eq17}) is \textit{locally topologically conjugated} to the flow of the linearized system in the vicinity of \textit{fixed points}. So, let's consider the linearized system in the basis of the eigenvectors:

\[
\begin{aligned}
 \dot{x_1} & = \lambda_1 x_1, \hfill \\
 \dot{x_2} & = \lambda_2 x_2, \hfill \\
 \dot{x_3} & = \lambda_3 x_3.
\end{aligned}
\]

where $\lambda_{1,2,3}$ are the eigenvalues of the functional Jacobian matrix.
The \textit{flow curvature manifold} (\ref{eq24}) associated with this linearized system reads:

\[
\phi ( \vec{X} ) = \det( \dot {\vec {X}},\ddot {\vec {X}},\dddot {\vec {X}} ) = x_1 x_2 x_3 \lambda_1 \lambda_2 \lambda_3 (\lambda_2 -  \lambda_1)(\lambda_1 -  \lambda_3)(\lambda_2 -  \lambda_3).
\]

\newpage

Then, it's easy to check that the determinant $D_3$ of the Hessian evaluated at $M$ is such that\footnote{The symbol $\propto$ means proportional to.}:

\[
D_3 \propto - 2\Delta^2 R,
\]

\smallskip

from which we deduce that if $D_3$ is positive, i.e. $R < 0$, then $M$ is a \textit{saddle-point} provided that $(D_1,D_2) \neq (+,+)$. \end{proof}

\subsection{Chua's system}\hfill \\

Let's consider again the system  (\ref{eq18}) of Thamilmaran \textit{et al.} [2004]:

\[
\begin{aligned}
 \dot{x} & = \beta_1 \left( z - x - u \right), \hfill \\
 \dot{y} & = \beta_2 z, \hfill \\
 \dot{z} & = -\alpha_2 z - y - x, \hfill \\
 \varepsilon \dot{u} & = x - k(u). \hfill
\end{aligned}
\]

\smallskip

where $k(u) = c_1 u^3 + c_2 u$, $\varepsilon = 1/\alpha_1$, $\alpha_2$, $c_{1,2}$ and $\beta_{1,2}$ are constant.\\

The reduced vector field (\ref{eq17})  reads:

\[
\begin{aligned}
 \dot{y} & = - \beta_2 \left( - 3c_1 u^2 - c_2 \right)z, \hfill \\
 \dot{z} & = - \left( - 3c_1 u^2 - c_2 \right) \left(-y - c_1 u^3 - c_2 u - \alpha_2 z \right), \hfill \\
 \dot{u} & = \beta_1 \left( - u + z - c_1 u^3 - c_2 u \right).
\end{aligned}
\]

\smallskip

The \textit{flow curvature manifold} (\ref{eq24}) associated with this reduced vector field reads\footnote{This equation which is too large to be presented here is available at http://ginoux.univ-tln.fr.}:

\[
\phi ( \vec{X} ) = \det( \dot {\vec {X}},\ddot {\vec {X}}, \dddot {\vec {X}} ) = \phi ( y,z,u ) = 0.
\]

\smallskip

By considering that the parameter set of this system is such that $\beta_2 \ll 1$ and according to proposition \ref{prop2} we find that:

\[
D_1 \propto c_2 (3\alpha_2 +   2 c_2 (1 + \alpha_2))^2,
\]

\[
D_2  \propto - (6 c_2 \alpha_2 + 4c_2^2(1 + \alpha_2) + \beta_1),
\]

\[
D_3 \propto (6 c_2 \alpha_2 + 4c_2^2(1 + \alpha_2))(6 c_2 \alpha_2 + 4c_2^2(1 + \alpha_2) + \beta_1) P(\alpha_2, c_2),
\]

\smallskip

where $P(\alpha_2, c_2)$ is a positive quadratic polynomial in $\alpha_2$.\\

Since $c_2 < 0$, we deduce that $M$ is a \textit{saddle point} provided that

\[
\alpha_2 < \dfrac{-2 c_2}{3 + 2 c_2}.
\]

\smallskip

Thus, we find Beno\^{i}t's result according to the \textit{Flow Curvature Method}.

\section{Discussion}

In this work Beno\^{i}t's theorem for the generic existence of ``canards'' solutions in \textit{singularly perturbed dynamical systems} of dimension three with one fast variable has been extended to those of dimension four. Then, it has been established that this result can be found according to the \textit{Flow Curvature Method}. The Hessian of the \textit{flow curvature manifold} and the so-called \textit{Second Derivative Test} enabled to characterize the nature of the \textit{pseudo-singular saddle points}. Applications to Chua's cubic model of dimension three and four highlighted the existence of ``canards'' solutions in such systems. According to Prof. Eric Beno\^{i}t (personal communications) the cases $(p,m)=(3,1)$ and $(p,m) = (2,2)$ for which his theorems [Beno\^{i}t, 1983, 2001] for canard existence at pseudo-singular points of saddle-type still holds have been completely analyzed while the case $p = 1$ and $m = 3$ remains an open problem since the fold becomes a two-dimensional manifold and the pseudo-singular fixed points become pseudo-singular curves. In this case, fold and cusps are defined according to the theory of surfaces singularities and are strongly related to Thom's catastrophe theory [Thom, 1989].

\section*{Acknowledgments}

First author would like to thank Prof. Martin Wechselberger for his fruitful
advices. Moreover, let's notice that our main result has been already established by Wechselberger [2012] who has extended \textit{canard theory} of singularly perturbed systems to the more general case of $k+m$-dimensional \textit{singularly perturbed systems} with $k$ \textit{slow} and $m$ \textit{fast} dimensions, with $k \geqslant 2$ and $m \geqslant 1$. The second author is supported by the grants MICIIN/FEDER
MTM 2008--03437, AGAUR 2009SGR410, and ICREA Academia and FP7-PEOPLE-2012-IRSES-316338.

\section*{References}

\hspace{0.2in} Andronov, A. A. {\&} Khaikin, S. E. [1937] \textit{Theory of oscillators}, I, Moscow (Engl. transl.,
Princeton Univ. Press, Princeton, N. J., 1949).\\

Arg\'{e}mi, J. [1978] ``Approche qualitative d'un probl\`{e}me de perturbations singuli\`{e}res dans
$\mathbb{R}^4$,'' in \textit{Equadiff 1978}, ed. R. Conti, G. Sestini, G. Villari, 330--340.\\

Beno\^{i}t, E., Callot, J.L., Diener,  F. \&  Diener, M. [1981] ``Chasse au canard,'' \textit{Collectanea Mathematica} {\bf 31--32} (1-3),
37--119.\\

Beno\^{i}t, E. [1982] ``Les canards de $\mathbb{R}^3$,'' {\it C.R.A.S.} t. 294, S\'{e}rie I, 483--488.\\

Beno\^{i}t, E. [1983] ``Syst\`{e}mes lents-rapides dans $\mathbb{R}^3$ et leurs canards,'' \textit{Soci\'{e}t\'{e} Math\'{e}matique de France}, {\it Ast\'{e}risque} (190--110), 159--191.\\

Beno\^{i}t, E. [2001] ``Perturbation singuli\`{e}re en dimension trois~: Canards en un point pseudosingulier
noeud,'' \textit{Bulletin de la Soci\'{e}t\'{e} Math\'{e}matique de France}, (129-1), 91--113.\\

Fenichel, N. [1971] ``Persistence and smoothness of invariant manifolds for flows,'' \textit{Ind. Univ. Math. J.} 21, 193--225.\\

Fenichel, N. [1974] ``Asymptotic stability with rate conditions,'' \textit{Ind. Univ. Math. J.} 23, 1109--1137.\\

Fenichel, N. [1977] ``Asymptotic stability with rate conditions II,'' \textit{Ind. Univ. Math. J.} 26, 81--93.\\

Fenichel, N. [1979] ``Geometric singular perturbation theory for ordinary differential equations,'' \textit{J. Diff. Eq.}, 53--98.\\

Ginoux, J.M., Rossetto,  B. \& Chua, L.O. [2008] ``Slow Invariant Manifolds as Curvature of the Flow of Dynamical
Systems,'' {\it Int. J. Bif. {\&} Chaos} 11 (18), 3409--3430.\\

Ginoux, J.M. [2009] \textit{Differential Geometry applied to Dynamical Systems}, World Scientific Series on Nonlinear Science, Series A {\bf 66} (World Scientific, Singapore).

Ginoux, J.M. \& Llibre, J. [2011] ``Flow Curvature Method applied to Canard Explosion,'' {\it J. Physics A: Math. Theor.}, 465203, 13pp.\\

Hartman, P. [1964] \textit{Ordinary Differential Equations}, J. Wiley and. Sons, New York.\\

Itoh, M. \& Chua, L.O. [1992] ``Canards and Chaos in Nonlinear Systems,'' \textit{Proc. of 1992 IEEE
International Symposium on Circuits and Systems}, San Diego, 2789--2792.\\

Jones, C.K.R.T. [1994] ``Geometric Singular Perturbation Theory in Dynamical Systems,''
\textit{Montecatini Terme}, L. Arnold, Lecture Notes in Mathematics, vol. 1609, Springer-Verlag, 44--118.\\

Kaper, T. [1999] ``An Introduction to Geometric Methods and Dynamical Systems Theory for Singular Perturbation Problems,'' in \textit{Analyzing multiscale phenomena using singular perturbation methods}, (Baltimore, MD, 1998), pages 85--131. Amer. Math. Soc., Providence, RI.\\

Lyapounov, A.M. [1892] ``The general problem of the stability of motion,'' Ph-D Thesis, St Petersbourg, (1892), reprinted in ``Probl\`{e}me g\'{e}n\'{e}ral de la stabilit\'{e} du mouvement,'' \emph{Ann. Fac. Sci.}, Toulouse 9, 203-474, (1907), reproduced in \textit{Ann. Math. Stud.}, \textbf{12}, (1949).\\

O'Malley, R.E. [1974] \textit{Introduction to Singular Perturbations}, Academic Press, New York.\\

O'Malley, R.E. [1991] \textit{Singular Perturbations Methods for Ordinary Differential Equations}, Springer-Verlag, New York.\\

Poincar\'{e}, H. [1881] ``Sur les courbes d\'{e}finies par une \'{e}quation diff\'{e}rentielle,'' {\it Journal de Math\'{e}matiques Pures et Appliqu\'{e}es}, 3\textsuperscript{o} s\'{e}rie, {\bf 7}, 375--422.\\

Poincar\'{e}, H. [1882] ``Sur les courbes d\'{e}finies par une \'{e}quation diff\'{e}rentielle,'' {\it Journal de Math\'{e}matiques Pures et Appliqu\'{e}es}, 3\textsuperscript{o} s\'{e}rie, {\bf 8}, 251--296.\\

Poincar\'{e}, H. [1885] ``Sur les courbes d\'{e}finies par une \'{e}quation diff\'{e}rentielle,''
{\it Journal de Math\'{e}matiques Pures et Appliqu\'{e}es}, 4\textsuperscript{o} s\'{e}rie, {\bf 1}, 167--244.\\

Poincar\'{e}, H. [1886] ``Sur les courbes d\'{e}finies par une \'{e}quation diff\'{e}rentielle,''
{\it Journal de Math\'{e}matiques Pures et Appliqu\'{e}es}, 4\textsuperscript{o} s\'{e}rie, {\bf 2}, 151--217.\\

Rossetto, B. [1986] ``Trajectoires lentes de syst\`{e}mes dynamiques lents-rapides,'' \textit{International Conference on Analysis and Optimization}, unpublished notes.\\

Szmolyan, P. \& Wechselberger, M. [2001] ``Canards in $\mathbb{R}^3$,'' \textit{J. Dif. Eqs.} 177, 419--453.\\

Takens, F. [1976] ``Constrained equations, a study of implicit differential equations and their discontinuous solutions,'' in \textit{Structural
stability, the theory of catastrophes and applications in the sciences}, \textit{Springer Lecture Notes in Math.}, \textbf{525}, 143--234.\\

Thamilmaran, K., Lakshmanan,  M. {\&} Venkatesan, A. [2004] ``Hyperchaos in a modified canonical Chua's circuit,'' {\it Int. J. Bifurcation and Chaos}, vol. 14, 221--243.\\

Thom, R. [1989] \textit{Structural Stability and Morphogenesis: An Outline of a General Theory of Models} Reading, MA: Addison-Wesley.\\

Thomas, Jr. G.B.  \& Finney, R.L. [1992] \textit{Maxima, Minima, and Saddle Points}, \S{}12.8 in Calculus and Analytic Geometry, 8$^{th}$ ed. Reading, MA: Addison-Wesley, 881-891.\\

Tikhonov, A.N. [1948] ``On the dependence of solutions of differential equations on a small parameter,'' \textit{Mat. Sbornik N.S.}, 31, 575--586.\\

Van der Pol, B. [1926] ``On relaxation-oscillations,'' {\it The London, Edinburgh, and Dublin Philosophical Magazine and Journal of Science}, {\bf 7} (2), 978--992.\\

Wechselberger, M.  [2005] ``Existence and Bifurcation of Canards in $\mathbb{R}^3$ in the case of a Folded Node,'' \textit{SIAM J. Applied Dynamical Systems} 4, 101--139.\\

Wechselberger, M. [2012] ``{A} propos de canards,'' \textit{Trans. Amer. Math. Soc.}, \textbf{364} (2012) 3289--330.\\

Zvonkin, A. K. \& Shubin, M. A. [1984] ``Non-standard analysis and singular perturbations of ordinary differential equations,'' {\it Uspekhi Mat. Nauk.} \textbf{39} 2 (236), 69--131.

\end{document}